\newtheorem{theorem}{Theorem}[section]
\newtheorem{proposition}[theorem]{Proposition}
\newtheorem{corollary}[theorem]{Corollary}
\newtheorem{lemma}[theorem]{Lemma}
\newcommand{\N}{{\mathbb N}}
\newcommand{\abs}[1]{\left\vert#1\right\vert}
\newcommand{\ceil}[1]{\left\lceil#1\right\rceil}
\title{Linear rates of asymptotic regularity for Halpern-type iterations}
\author{Hora\c tiu Cheval${}^a$ and Lauren\c{t}iu Leu\c{s}tean${}^{a,b,c}$\\[2mm]
\footnotesize ${}^a$LOS, Faculty of Mathematics and Computer Science, University of Bucharest\\
\footnotesize ${}^b$Simion Stoilow Institute of Mathematics of the Romanian Academy\\
\footnotesize ${}^c$Institute for Logic and Data Science, Bucharest\\
\footnotesize E-mails:  horatiu.cheval@unibuc.ro, laurentiu.leustean@unibuc.ro
}
\date{}
\begin{document}

\maketitle

\begin{abstract}
In this note we apply a lemma due to Sabach and Shtern to compute linear 
rates of asymptotic regularity for Halpern-type nonlinear iterations studied in optimization and nonlinear analysis.\\

\noindent {\em Keywords:} Rates of asymptotic regularity; Halpern iteration; Proximal point algorithm.\\

\noindent  {\it Mathematics Subject Classification 2010}:  47H05, 47H09, 47J25.

\end{abstract}

\section{Introduction}

Sabach and Shtern proved in \cite{SabSht17} a lemma that can be used to show the linear convergence towards $0$ 
of sequences of nonnegative reals satisfying certain conditions. 
They used this lemma to get linear rates of asymptotic regularity for the sequential 
averaging method (SAM),  an iteration defined by Xu \cite{Xu04} as a viscosity approximation 
of the Halpern iteration. As an immediate consequence, linear rates for the Halpern iteration 
are obtained. The lemma of Sabach and Shtern has since been employed to obtain linear rates of asymptotic 
regularity for the Tikhonov-Mann and modified Halpern iterations in \cite{CheKohLeu23} and 
in \cite{LeuPin23} for the alternating Halpern-Mann iteration, introduced recently by Dinis and Pinto \cite{DinPin23}.

The following lemma is a slight reformulation of \cite[Lemma~3]{SabSht17}, proved in \cite[Lemma~2.8]{LeuPin23}.

\begin{lemma}\label{lem:sabach-shtern-v2}
Let $L > 0$, $J \geq N \geq 2$, $\gamma \in (0, 1]$, $(c_n)$ be a sequence bounded above by $L$, and 
$a_n = \frac{N}{\gamma(n + J)}$ for all $n\in\N$. Suppose that $(s_n)$ is a sequence of nonnegative 
reals such that $s_0 \leq L$ and, for all $n \in \N$,
\begin{align}
s_{n + 1} \leq (1 - \gamma a_{n + 1}) s_n + (a_n - a_{n + 1}) c_n. \label{eq:sabach-shtern-main-ineq}
\end{align}
Then, for all $n \in \N$,
\begin{align}
s_n \leq \frac{J L}{\gamma(n + J)}.
\end{align}
\end{lemma}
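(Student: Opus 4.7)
The plan is a straightforward induction on $n$, where the hypothesis $J \geq N \geq 2$ will surface at the very last step of the inductive computation as the elementary inequality $\tfrac{1}{N}+\tfrac{1}{J}\leq 1$.

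First I would dispatch the base case $n=0$: the claimed bound becomes $\frac{JL}{\gamma J}=\frac{L}{\gamma}$, and since $\gamma\in(0,1]$ and $s_0\leq L$, one gets $s_0\leq L\leq L/\gamma$. For the induction step, assume $s_n\leq \frac{JL}{\gamma(n+J)}$. I would then compute the two coefficients appearing in \eqref{eq:sabach-shtern-main-ineq} explicitly:
\[
\gamma a_{n+1}=\frac{N}{n+1+J},\qquad a_n-a_{n+1}=\frac{N}{\gamma(n+J)(n+1+J)}.
\]
Note that $1-\gamma a_{n+1}=\frac{n+1+J-N}{n+1+J}\geq 0$ since $J\geq N$, so the inductive bound on $s_n$ can be plugged in together with $c_n\leq L$ without sign issues.

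Substituting into \eqref{eq:sabach-shtern-main-ineq} and collecting the common factor $\frac{L}{\gamma(n+J)(n+1+J)}$ yields
\[
s_{n+1}\leq \frac{L}{\gamma(n+J)(n+1+J)}\bigl[(n+1+J-N)J+N\bigr].
\]
The desired conclusion $s_{n+1}\leq \frac{JL}{\gamma(n+1+J)}$ is therefore equivalent to $(n+1+J-N)J+N\leq J(n+J)$, which after cancellation reduces to $J+N\leq NJ$, i.e.\ $\tfrac{1}{N}+\tfrac{1}{J}\leq 1$. This last inequality is exactly where the hypothesis $J\geq N\geq 2$ is used.

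The main obstacle, in the sense of the only non-mechanical check, is precisely identifying that the induction closes under the sharp condition $\tfrac{1}{N}+\tfrac{1}{J}\leq 1$ and recognizing that $J\geq N\geq 2$ is just enough to guarantee it (with equality at $N=J=2$). Everything else is algebraic simplification of the recurrence, so I would keep the exposition tight: state the induction, display the substitutions for $\gamma a_{n+1}$ and $a_n-a_{n+1}$, and reduce the inductive step to the stated elementary inequality.
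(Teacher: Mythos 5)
Your induction is correct: the base case, the sign check $1-\gamma a_{n+1}\geq 0$ from $J\geq N$, and the reduction of the inductive step to $J+N\leq NJ$ (equivalently $\tfrac1N+\tfrac1J\leq 1$, guaranteed by $J\geq N\geq 2$) all check out. The paper itself gives no proof of this lemma but defers to a cited reference, and your argument is precisely the standard induction used there, so there is nothing further to compare.
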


In this paper we show that Lemma~\ref{lem:sabach-shtern-v2} can be applied to compute linear 
rates of asymptotic regularity for other Halpern-type nonlinear iterations studied in optimization and nonlinear analysis.

\section{Preliminaries}

A $W$-space is a structure $(X,d,W)$, where $(X,d)$  is a metric space and 
$W:X\times X\times [0,1]\to X$. We think of $W(x,y,\lambda)$ as an abstract convex 
combination of the points $x,y\in X$. That is why we  write $(1-\lambda)x + \lambda y$ 
instead of $W(x,y,\lambda)$. 

A $W$-hyperbolic space \cite{Koh05} is a $W$-space satisfying for all $x,y,w,z\in X$, $\lambda,\tilde{\lambda}\in[0,1]$,
$$
\begin{array}{ll}
\text{(W1)} & d(z,(1-\lambda)x + \lambda y) \le  (1-\lambda)d(z,x)+\lambda d(z,y),\\[1mm]
\text{(W2)} & d((1-\lambda)x + \lambda y,(1-\tilde{\lambda})x + \tilde{\lambda} y) =  \vert \lambda-\tilde{\lambda} \vert d(x,y), \\[1mm]
\text{(W3)} & (1-\lambda)x + \lambda y = \lambda y + (1-\lambda)x, \\[1mm]
\text{(W4)} & d((1-\lambda)x + \lambda z,(1-\lambda)y + \lambda w) \le (1-\lambda)d(x,y)+\lambda d(z,w).
\end{array}
$$
It is obvious that (convex subsets of) normed spaces are $W$-hyperbolic spaces. Other examples of $W$-hyperbolic spaces
are  Busemann spaces (as proved in \cite[Proposition~2.6]{AriLeuLop14}) and 
CAT(0) spaces (as proved in \cite[pp. 386-388]{Koh08}). 

We usually write $X$ instead of $(X,d,W)$. A nonempty subset $C$ of $X$ is said to be convex if 
for all $x,y\in C$, we have that $(1-\lambda)x + \lambda y \in C$ for every $\lambda\in[0,1]$.  

\begin{proposition}
Let $X$ be a $W$-hyperbolic space. For all $x,y,w,z\in X$, $\lambda, \widetilde{\lambda}\in[0,1]$, 
\begin{align}
d(x,(1-\lambda)x + \lambda y)=\lambda d(x,y) \text{~and~} d(y,(1-\lambda)x + \lambda y)=(1-\lambda)d(x,y), 
\label{dxlambday}\\
d((1 - \lambda) x + \lambda z, (1 - \widetilde{\lambda}) y + \widetilde{\lambda} w) \leq (1 - \lambda) d(x, y) + \lambda d(z, w) + 
\abs{\lambda - \widetilde{\lambda}} d(y, w). \label{prop:arbitrary-w-ineq}
\end{align}
\end{proposition}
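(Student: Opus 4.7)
The plan is to prove the two assertions in order, deriving the second from the first together with the axioms of a $W$-hyperbolic space.

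For the equalities \eqref{dxlambday}, the main idea is a ``two-sided sandwich'' using (W1) and the triangle inequality. Applying (W1) with $z := x$ gives the upper bound $d(x,(1-\lambda)x+\lambda y)\le \lambda d(x,y)$, and applying it with $z := y$ gives $d(y,(1-\lambda)x+\lambda y)\le (1-\lambda)d(x,y)$. On the other hand, by the triangle inequality,
\[
d(x,y)\le d(x,(1-\lambda)x+\lambda y)+d((1-\lambda)x+\lambda y,y)\le \lambda d(x,y)+(1-\lambda)d(x,y)=d(x,y),
\]
so both upper bounds must in fact be equalities. This yields \eqref{dxlambday} with essentially no obstacle.

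For the inequality \eqref{prop:arbitrary-w-ineq}, the plan is to interpolate through the auxiliary point $(1-\lambda)y+\lambda w$, splitting the distance by the triangle inequality:
\[
d((1-\lambda)x+\lambda z,(1-\widetilde\lambda)y+\widetilde\lambda w) \le d((1-\lambda)x+\lambda z,(1-\lambda)y+\lambda w) + d((1-\lambda)y+\lambda w,(1-\widetilde\lambda)y+\widetilde\lambda w).
\]
The first term on the right is bounded by $(1-\lambda)d(x,y)+\lambda d(z,w)$ directly by axiom (W4), since both convex combinations use the same parameter $\lambda$. The second term is exactly of the form controlled by (W2) applied to the pair $y,w$ with parameters $\lambda$ and $\widetilde\lambda$, giving $\lvert \lambda-\widetilde\lambda\rvert d(y,w)$. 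Summing the two bounds yields \eqref{prop:arbitrary-w-ineq}.

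There is no real obstacle here: the proofs amount to choosing the right instantiations of (W1), (W2), (W4) and an intermediate point for the triangle inequality. The only subtle point is that one could be tempted to prove \eqref{dxlambday} by setting $\lambda=0$ or $\lambda=1$ directly in (W2), but the axioms do not a priori identify $1\cdot x+0\cdot y$ with $x$, so the sandwich argument using (W1) and the triangle inequality is the cleanest route.
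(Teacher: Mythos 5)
Your proof is correct. The paper itself does not spell out an argument for this proposition (it delegates \eqref{dxlambday} to the setting of Takahashi's convex metric spaces and \eqref{prop:arbitrary-w-ineq} to a cited lemma of Cheval--Leu\c{s}tean), and your two arguments --- the (W1)-plus-triangle-inequality sandwich for the equalities, which indeed uses only (W1) exactly as the paper's remark about convex metric spaces suggests, and the interpolation through $(1-\lambda)y+\lambda w$ combined with (W4) and (W2) for the inequality --- are precisely the standard proofs being referenced, with all details correctly filled in.
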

\begin{proof}
\eqref{dxlambday} holds even in the more general setting of convex metric spaces, introduced 
by Takahashi \cite{Tak70} as $W$-spaces satisfying (W1). For the proof of \eqref{prop:arbitrary-w-ineq} see
\cite[Lemma~2.1(iv)]{CheLeu22}.
\end{proof}

Asymptotic regularity is a very important notion in nonlinear analysis and optimization that was defined by 
Browder and Petryshyn \cite{BroPet66} for the Picard iteration of a nonexpansive mapping and extended, subsequently, by 
Borwein, Reich and Shafrir \cite{BorReiSha92} to general nonlinear iterations. It turns out that 
two notions of asymptotic regularity for a sequence $(a_n)$ in a metric space $(X,d)$ arise naturally. 
Firstly, we say that $(a_n)$ is asymptotically regular if $\lim\limits_{n\to \infty} d(a_n,a_{n+1})=0$. 
If, furthermore, $C$ is a nonempty subset of $X$ and $T:C\to C$ is a mapping, then $(a_n)$ is said to be 
$T$-asymptotically regular if $\lim\limits_{n\to \infty} d(a_n,Ta_n)=0$. 

If $(a_n)$ is asymptotically regular, then a rate of convergence of  $(d(a_n,a_{n+1}))$ towards $0$  
is called a rate of asymptotic regularity of $(a_n)$. Similarly, if $(a_n)$ is 
$T$-asymptotically regular, then a rate of convergence of $(d(a_n,Ta_n ))$ towards $0$ is 
called a rate of $T$-asymptotic regularity of $(a_n)$.

As in \cite{LeuPin21}, we can extend the notion of $T$-asymptotic regularity to countable families of mappings. Thus,
if $(T_n : C \to C)$ is a  countable family, then we say that $(x_n)$ is $(T_n)$-asymptotically regular with rate $\varphi$ if 
$\lim\limits_{n\to \infty}d(x_n, T_nx_n)=0$ with rate of convergence $\varphi$.

Recall that if $(b_n)$ is a sequence of nonnegative reals such that $\lim\limits_{n\to \infty}b_n=0$, then 
a rate of convergence of $(b_n)$ (towards $0$) is a function $\varphi:\N\to \N$ satisfying 
\[\forall k\in\N\, \forall n\geq \varphi(k) \left(b_n\leq \frac1{k+1}\right).\] 
We denote by $\N^*$ the set $\N\setminus \{0\}$ of positive natural numbers.

\section{Linear rates of ($T$-)asymptotic regularity}

In the sequel, if not otherwise stated, $X$ is a $W$-hyperbolic space, 
$C$ is a convex subset of $X$, and $T:C\to C$ is a nonexpansive mapping, that is 
$d(Tx,Ty)\leq d(x,y)$ for all $x,y\in C$. We denote by $\operatorname{Fix}(T)$ the set of fixed points of $T$ and we assume that $T$ has fixed points, 
i.e. $\operatorname{Fix}(T)\ne\emptyset$.

\subsection{Halpern iteration}

Let us recall that the well-known Halpern iteration is defined as follows: 
\begin{equation}
x_0=x, \quad x_{n + 1}  = (1 - \alpha_n) u + \alpha_n Tx_n, \label{def-Halpern}
\end{equation}
where $x,u\in C$ and $(\alpha_n)$ is a sequence in $[0,1]$.

In \cite{CheKohLeu23}, the authors and Kohlenbach computed linear rates of ($T-$)asymptotic regularity, 
for a special choice of the parameter sequences,
for the modified Halpern iteration, a generalization of the Halpern iteration. 
One gets immediately linear rates for this iteration too. 

\begin{proposition}\label{Halpern-rates}
Let $\alpha_0=0$ and $\alpha_n=1-\frac{2}{n+1}$ for $n\ge 1$. Define
\begin{equation}\label{def-rates-Halpern}
\Phi(k) = 8M(k+1)-1 \quad \text{and} \quad \Phi^*(k)  = 16M(k+1)-1,
\end{equation}
where $M\in\N^*$ is such that $M\ge \max\{d(x,p), d(u,p)\}$ for some fixed point $p$ of $T$. 

Then $\Phi$ is a rate of asymptotic regularity for $(x_n)$ and $\Phi^*$  is a rate of $T$-asymptotic 
regularity for $(x_n)$.
\end{proposition}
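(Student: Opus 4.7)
The plan is to apply Lemma~\ref{lem:sabach-shtern-v2} to a (shifted) version of the sequence $t_n := d(x_n, x_{n+1})$, and then deduce the $T$-asymptotic regularity rate from the asymptotic regularity one by a triangle inequality. (Alternatively, the statement follows from the modified Halpern rates in~\cite{CheKohLeu23}, since Halpern is the special case obtained by taking a trivial modification parameter.)

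First I would establish the Fej\'er-type bound $d(x_n, p) \leq M$ by induction on $n$: the base case follows from the definition of $M$, and the inductive step uses (W1) together with the nonexpansiveness of $T$ applied to $x_{n+1} = (1-\alpha_n)u + \alpha_n T x_n$. The triangle inequality then gives $d(u, T x_n) \leq d(u,p) + d(p, x_n) \leq 2M$ for every $n$.

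Next I would derive a recursive inequality for $t_n$. Applying~\eqref{prop:arbitrary-w-ineq} to the two convex combinations defining $x_{n+1}$ and $x_{n+2}$, and using nonexpansiveness of $T$ and the above bound on $d(u, T x_{n+1})$, one obtains
\[
t_{n+1} \leq \alpha_n t_n + 2M\abs{\alpha_n - \alpha_{n+1}} \leq \alpha_n t_n + \frac{4M}{(n+1)(n+2)},
\]
where $\abs{\alpha_n - \alpha_{n+1}} = \frac{2}{(n+1)(n+2)}$ for $n \geq 1$ (and equals $0$ for $n = 0$). The hard part is fitting this into the Sabach-Shtern form, whose error term is $(a_n - a_{n+1}) c_n$ with $a_n = N/(\gamma(n+J))$ and $J \geq N \geq 2$. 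The natural choice $N = 2$, $\gamma = 1$ produces denominators of the form $(n+J)(n+J+1)$, which do not match $(n+1)(n+2)$ directly. The cleanest fix is to pass to $s_n := t_{n+1} = d(x_{n+1}, x_{n+2})$, for which the recursion reads $s_{n+1} \leq \alpha_{n+1} s_n + \frac{4M}{(n+2)(n+3)}$. With $N = J = 2$, $\gamma = 1$, constant $c_n \equiv 2M$, and an appropriate $L$ proportional to $M$, the hypotheses of Lemma~\ref{lem:sabach-shtern-v2} are verified directly: the coefficient check $\alpha_{n+1} = \frac{n}{n+2} \leq \frac{n+1}{n+3} = 1 - a_{n+1}$ is elementary, the error term matches exactly, and $s_0 = 0$ since $\alpha_0 = \alpha_1 = 0$ forces $x_1 = x_2 = u$. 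The lemma then yields a bound $t_n \leq \frac{CM}{n+1}$ with an explicit constant $C$, from which $\Phi$ is obtained by inversion.

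Finally, for $T$-asymptotic regularity I would apply~\eqref{dxlambday} to obtain $d(x_{n+1}, T x_n) = (1-\alpha_n) d(u, T x_n) \leq \frac{4M}{n+1}$, and combine this with the triangle inequality $d(x_n, T x_n) \leq d(x_n, x_{n+1}) + d(x_{n+1}, T x_n)$. Adding the two rates essentially doubles the constant of $\Phi$ and yields $\Phi^*$. The main technical obstacle throughout is the index bookkeeping needed to align the recursion with the Sabach-Shtern form, which the shift $s_n = d(x_{n+1}, x_{n+2})$ resolves cleanly.
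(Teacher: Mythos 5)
Your proposal is correct, but it takes a genuinely different route from the paper: the paper's entire proof is a one-line citation, applying \cite[Proposition~4.7]{CheKohLeu23} for the modified Halpern iteration with $\lambda_n=1$ and $\beta_{n+1}=\alpha_n$ (the ``alternative'' you mention parenthetically), whereas you give a direct, self-contained argument via Lemma~\ref{lem:sabach-shtern-v2}, essentially replaying for the Halpern iteration the argument the paper later uses for SAM. Your index bookkeeping checks out: with $\alpha_0=\alpha_1=0$ one indeed has $x_1=x_2=u$, so the shifted sequence $s_n=d(x_{n+1},x_{n+2})$ satisfies $s_0=0$, the recursion $s_{n+1}\leq\alpha_{n+1}s_n+\frac{4M}{(n+2)(n+3)}$ fits the Sabach--Shtern form with $N=J=2$, $\gamma=1$, $c_n\equiv 2M$, $L=2M$ (using $\alpha_{n+1}=\frac{n}{n+2}\leq\frac{n+1}{n+3}=1-a_{n+1}$ and nonnegativity of $s_n$), and the lemma gives $d(x_n,x_{n+1})\leq\frac{4M}{n+1}$, hence $d(x_n,Tx_n)\leq\frac{8M}{n+1}$. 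What your approach buys is sharper constants: you actually prove that $4M(k+1)-1$ and $8M(k+1)-1$ are valid rates, which are stronger than the claimed $\Phi$ and $\Phi^*$ (and therefore certainly establish the proposition, since any pointwise larger function is still a rate). This factor-of-two improvement is precisely what the paper itself recovers only later, via the SAM corollary with the shifted parameters $\alpha_n=1-\frac{2}{n+2}$; the paper's citation route inherits the looser constants of \cite{CheKohLeu23}. The one cosmetic slip is that your unshifted recursion is stated with coefficient $\alpha_n$ while the shifted one uses $\alpha_{n+1}$; both versions of inequality \eqref{prop:arbitrary-w-ineq} are available and the one you actually use in the verification is consistent, so nothing breaks.
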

\begin{proof}
Apply \cite[Proposition~4.7]{CheKohLeu23} with $\lambda_n=1$ and $\beta_{n+1}=\alpha_n$. 
\end{proof}

Remark that if $C$ is a bounded set, then one can take $M\in \N^*$ to be an upper bound on the diameter of $C$.\\

For Hilbert spaces, $u=x$ and $\alpha_n=1-\frac1{n+2}$, Lieder \cite{Lie21} proved the 
following inequality: for all $n\geq 1$ and all fixed points $p$ of $T$, 
\begin{equation}\label{ineq-Lieder}
\|x_n-Tx_n\|\leq \frac{2\|x_0-p\|}{n+1}.
\end{equation}
Furthermore, Lieder showed that the bound in \eqref{ineq-Lieder} is tight.  As a consequence, one 
gets that $\Phi_0(k)= 2M(k+1)-1$, where $M\in \N^*$ is such that $M\geq \|x_0-p\|$, is a 
rate of $T$-asymptotic regularity that improves by a factor of $8$ the one for $W$-hyperbolic spaces. \\

In a recent paper \cite{HeXuDonMei24},  He et al. defined  the following Halpern iteration with adaptive anchoring parameters:
\begin{align}\label{def-halpern-adaptive}
  x_0 \in X, \quad x_{n + 1} = \frac{1}{\varphi_{n + 1} + 1} x_0 + \frac{\varphi_{n + 1}}{\varphi_{n + 1} + 1}T x_n, 
\end{align}
where $X$ is a Hilbert space, $T:X\to X$ is nonexpansive with $\operatorname{Fix}(T)\ne\emptyset$, and, for every $n\geq 1$, 
$$\varphi_{n} = \dfrac{2\langle x_{n - 1} - T x_{n - 1}, x_0 - x_{n - 1}\rangle}{\|x_{n - 1} - T x_{n - 1}\|^2} + 1.$$
They proved (see \cite[Lemma~3.1.(i)]{HeXuDonMei24} and  \cite[Theorem~3.2]{HeXuDonMei24})  that  for  $n\geq 1$ and  any fixed point $p$ of $T$,
\begin{equation}\label{rates-T-as-reg-halpern-adaptive}
\varphi_n \geq n \quad \text{and} \quad \|x_n - T x_n\| \leq \frac{2\|x_0- p\|}{\varphi_n + 1},
\end{equation}
that is in general an impovement of  \eqref{ineq-Lieder}. Furthermore, \cite[Remark~3.3]{HeXuDonMei24} provides an example  showing that the rate of $T$-asymptotic regularity given by \eqref{rates-T-as-reg-halpern-adaptive} is better than the one from \cite{Lie21}. 

As the definition of $\varphi_n$ uses essentially the Hilbert space structure, the proof of \eqref{rates-T-as-reg-halpern-adaptive}  from \cite{HeXuDonMei24} cannot be generalized to Banach spaces or to  $W$-hyperbolic spaces. Furthermore, the proof does not use Sabach and Shtern's lemma. A problem for future research is to  find  an adaptive selection method for $\varphi_n$ that can be  expressed in  a more general setting (Banach spaces, classes of $W$-spaces) and to compute rates of $T$-asymptotic regularity for the corresponding adaptive Halpern iteration having a similar form with the ones from \cite{HeXuDonMei24}.

\subsection{Sequential averaging method}

Xu \cite{Xu04} introduced, in the setting of Banach spaces, a viscosity version of the Halpern 
iteration that was called the  sequential averaging method (SAM) by Sabach and Shtern \cite{SabSht17}, 
who studied it in  Hilbert spaces. Sabach and Shtern obtained, as an application of \cite[Lemma~3]{SabSht17}, 
linear rates of asymptotic regularity for SAM. In what follows, we show that we can obtain such linear rates in the much more general setting 
of $W$-hyperbolic spaces, too.

Let $\rho \in [0,1)$ and $f: C\to C$ be a $\rho$-contraction, that is for all 
$x,y\in C$, $d(f(x), f(y))\leq \rho d(x,y)$. 
The sequential averaging method (SAM)  is defined by
\begin{equation}
x_0=x, \qquad x_{n + 1}  = (1 - \alpha_n) f(x_n) + \alpha_n Tx_n, \label{def-SAM}
\end{equation}
where $x\in C$ and $(\alpha_n)$ is a sequence in $[0,1]$.

The following lemma extends to $W$-hyperbolic spaces results obtained for Banach spaces in \cite[proof of Theorem~3.2]{Xu04}. 

\begin{lemma}\label{lem:SAM}
\begin{enumerate}
\item 
For all $n \in \N$, 
\begin{align}
d(x_{n + 2}, x_{n + 1}) & \leq (1-(1-\rho)(1-\alpha_{n+1})) d(x_{n + 1}, x_n)+ |\alpha_{n+1} - \alpha_n|d(f(x_n),Tx_n), 
\label{main-ineq-SAM}\\
d(x_n, T x_n) & \leq d(x_n,x_{n + 1}) + (1-\alpha_n) d(f(x_n), Tx_n). \label{main-ineq-SAM-2}
\end{align}
\item\label{lem:SAM-ii} 
For all $n \in \N$ and any fixed point $p$ of $T$, 
\begin{align}
d(x_n, p) &\leq M,\label{xnp-SAM}\\
d(f(x_n), Tx_n) &\leq 2M, \label{fxn-Txn-SAM}
\end{align}
where $M\in \N^*$ is such that $M\geq \max\left\{d(x, p), \frac{d(p,f(p))}{1 - \rho}\right\}$. 
\end{enumerate}
\end{lemma}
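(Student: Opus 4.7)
The plan is to handle the two parts of the lemma separately, working directly from the SAM recurrence \eqref{def-SAM} together with the $W$-hyperbolic inequalities \eqref{dxlambday} and \eqref{prop:arbitrary-w-ineq}, the $\rho$-contractivity of $f$, and the nonexpansiveness of $T$.

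For \eqref{main-ineq-SAM}, I would write $x_{n+2} = (1-\alpha_{n+1})f(x_{n+1}) + \alpha_{n+1}Tx_{n+1}$ and $x_{n+1} = (1-\alpha_n)f(x_n) + \alpha_n Tx_n$ and apply \eqref{prop:arbitrary-w-ineq} with these two convex combinations, obtaining
\[
d(x_{n+2},x_{n+1}) \leq (1-\alpha_{n+1})d(f(x_{n+1}),f(x_n)) + \alpha_{n+1}d(Tx_{n+1},Tx_n) + \abs{\alpha_{n+1}-\alpha_n}d(f(x_n),Tx_n).
\]
Bounding $d(f(x_{n+1}),f(x_n)) \leq \rho\, d(x_{n+1},x_n)$ and $d(Tx_{n+1},Tx_n) \leq d(x_{n+1},x_n)$, the first two terms collect into a coefficient $\rho(1-\alpha_{n+1}) + \alpha_{n+1} = 1-(1-\rho)(1-\alpha_{n+1})$, giving \eqref{main-ineq-SAM}. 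For \eqref{main-ineq-SAM-2}, I would split via the triangle inequality $d(x_n,Tx_n) \leq d(x_n,x_{n+1}) + d(x_{n+1},Tx_n)$ and then use the second identity in \eqref{dxlambday} with $x=f(x_n)$, $y=Tx_n$, $\lambda=\alpha_n$ to get $d(x_{n+1},Tx_n) = (1-\alpha_n)d(f(x_n),Tx_n)$.

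For part \eqref{lem:SAM-ii}, I would establish \eqref{xnp-SAM} by induction on $n$. The base $n=0$ is immediate from the choice of $M$. For the step, apply (W1) together with $Tp=p$ and the contraction of $f$:
\[
d(x_{n+1},p) \leq (1-\alpha_n)\bigl[\rho\, d(x_n,p) + d(f(p),p)\bigr] + \alpha_n d(x_n,p) = \bigl[1-(1-\rho)(1-\alpha_n)\bigr]d(x_n,p) + (1-\alpha_n)d(f(p),p).
\]
The crucial observation, which is also the main (though very mild) obstacle, is that the hypothesis $M \geq d(p,f(p))/(1-\rho)$ is precisely what is needed to make the right-hand side a convex combination of two quantities bounded by $M$, and hence $\leq M$. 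Finally, \eqref{fxn-Txn-SAM} follows from $d(f(x_n),Tx_n) \leq d(f(x_n),p) + d(p,Tx_n)$, where $d(p,Tx_n) = d(Tp,Tx_n) \leq d(x_n,p) \leq M$ by nonexpansiveness, and $d(f(x_n),p) \leq \rho\, d(x_n,p) + d(f(p),p) \leq \rho M + (1-\rho)M = M$ by contractivity.
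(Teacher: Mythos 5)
Your proposal is correct and follows essentially the same route as the paper: \eqref{prop:arbitrary-w-ineq} plus contractivity/nonexpansiveness for \eqref{main-ineq-SAM}, the triangle inequality with \eqref{dxlambday} for \eqref{main-ineq-SAM-2}, induction using (W1) and the bound $d(p,f(p))\leq(1-\rho)M$ for \eqref{xnp-SAM}, and the same triangle-inequality decomposition (merely regrouped) for \eqref{fxn-Txn-SAM}. No gaps.
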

\begin{proof} 
\begin{enumerate}
\item Let $n\in\N$.  We have, by \eqref{prop:arbitrary-w-ineq}, that 
\begin{align*}
d(x_{n + 2}, x_{n + 1}) 
&\leq (1 - \alpha_{n+1}) d(f(x_{n + 1}), f(x_n)) + \alpha_{n+1} d(T x_{n + 1}, T x_n) \\
& \quad + |\alpha_{n+1} - \alpha_n| d(f(x_n), T x_n).
\end{align*}
As $f$ is a $\rho$-contraction and $T$ is nonexpansive, we get \eqref{main-ineq-SAM}.
Moreover, 
\begin{align*}
d(x_n,Tx_n) &\leq d(x_n,x_{n + 1})+ d(x_{n + 1},T x_n) \stackrel{\eqref{dxlambday}}{=} d(x_n,x_{n + 1})+ 
(1-\alpha_n) d(f(x_n), Tx_n).
\end{align*}
\item We prove \eqref{xnp-SAM} by induction on $n$. The case $n=0$ is obvious.
$n \Rightarrow n+1$: 
\begin{align*}
d(x_{n + 1}, p) 
& \leq (1 - \alpha_n)d(f(x_n),p) + \alpha_n d(Tx_n, p) \quad \text{by (W1)}\\ 
&\leq  (1 - \alpha_n)d(f(x_n),f(p)) + (1 - \alpha_n)d(f(p),p) +\alpha_n d(x_n, p) \\
&\leq (1 - \alpha_n)\rho d(x_n,p)+\alpha_n d(x_n, p) + (1 - \alpha_n)d(f(p),p)\\
&\leq (1 - \alpha_n)\rho M +\alpha_n M + (1 - \alpha_n)(1-\rho)M =M,
\end{align*}
by the definition of $M$ and the induction hypothesis. Furthermore, for all $n\in\N$, 
\begin{align*}
d(f(x_n), Tx_n) 
&\leq d(f(x_n),f(p))+ d(f(p),p)+ d(p, T x_n)  \leq \rho d(x_n,p)+ d(f(p),p) + d(x_n,p)\\
& \stackrel{\eqref{xnp-SAM}}{\leq} (1+\rho)M + d(f(p),p) \leq 2M.
\end{align*}
\end{enumerate}
\end{proof}

\begin{proposition}\label{main-prop-SAM}
Let $\alpha_n = 1-\frac{2}{(1 - \rho)(n + J)}$ for all $n\in\N$, where $J = 2\ceil{\frac{1}{1 - \rho}}$. 
Then for all $n \in \N$,
\begin{align}
d(x_{n + 1}, x_n) & \leq \frac{2MJ}{(1 - \rho)(n + J)}, \label{SAM-as-reg}\\
d(T x_n, x_n) & \leq \frac{2M(J + 2)}{(1 - \rho)(n + J)}, \label{SAM-T-as-reg}
\end{align}
where $M$ is as in Lemma~\ref{lem:SAM}.  
\end{proposition}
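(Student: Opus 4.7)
The plan is to apply Lemma~\ref{lem:sabach-shtern-v2} to the sequence $s_n := d(x_{n+1}, x_n)$, with parameters $\gamma := 1 - \rho$, $N := 2$, $L := 2M$, and the given $J = 2\ceil{\frac{1}{1-\rho}}$. First I would set $a_n := \frac{N}{\gamma(n+J)} = \frac{2}{(1-\rho)(n+J)}$ and observe that this is exactly $1 - \alpha_n$ for the chosen parameter sequence. One readily checks the hypotheses on the parameters: $\gamma \in (0,1]$ since $\rho \in [0,1)$, and $J \geq 2 = N$ because $J = 2\ceil{\frac{1}{1-\rho}} \geq 2$.

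Next I would verify inequality \eqref{eq:sabach-shtern-main-ineq}. With $c_n := d(f(x_n), Tx_n)$, Lemma~\ref{lem:SAM}.\ref{lem:SAM-ii} gives $c_n \leq 2M = L$, and $s_0 = d(x_1, x_0) \leq d(x_1, p) + d(p, x_0) \leq 2M = L$ by \eqref{xnp-SAM}. The coefficient of $s_n$ in \eqref{main-ineq-SAM} is
\[
1 - (1-\rho)(1 - \alpha_{n+1}) = 1 - \gamma a_{n+1},
\]
matching the form required. For the remaining term, since $(\alpha_n)$ is increasing (because $(a_n) = (1-\alpha_n)$ is decreasing), we have
\[
|\alpha_{n+1} - \alpha_n| = \alpha_{n+1} - \alpha_n = (1 - \alpha_n) - (1 - \alpha_{n+1}) = a_n - a_{n+1},
\]
so \eqref{main-ineq-SAM} becomes precisely $s_{n+1} \leq (1 - \gamma a_{n+1}) s_n + (a_n - a_{n+1}) c_n$. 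Lemma~\ref{lem:sabach-shtern-v2} then yields
\[
d(x_{n+1}, x_n) = s_n \leq \frac{JL}{\gamma(n+J)} = \frac{2MJ}{(1-\rho)(n+J)},
\]
which is \eqref{SAM-as-reg}.

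For \eqref{SAM-T-as-reg} I would simply combine \eqref{main-ineq-SAM-2} with \eqref{SAM-as-reg} and \eqref{fxn-Txn-SAM}, obtaining
\[
d(Tx_n, x_n) \leq d(x_n, x_{n+1}) + (1-\alpha_n)\, d(f(x_n), Tx_n) \leq \frac{2MJ}{(1-\rho)(n+J)} + \frac{2}{(1-\rho)(n+J)} \cdot 2M = \frac{2M(J+2)}{(1-\rho)(n+J)}.
\]

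The only real obstacle is bookkeeping: one must pick $\gamma$, $N$, $L$ so that the $a_n$ of Lemma~\ref{lem:sabach-shtern-v2} coincides with $1 - \alpha_n$ and so that the ``telescoping'' coefficient $|\alpha_{n+1} - \alpha_n|$ equals $a_n - a_{n+1}$. Once these identifications are made, everything reduces to invoking the two parts of Lemma~\ref{lem:SAM} and the Sabach--Shtern lemma in turn; no further estimation is needed.
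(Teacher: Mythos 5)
Your proposal is correct and follows exactly the paper's own argument: the same instantiation of Lemma~\ref{lem:sabach-shtern-v2} with $L=2M$, $N=2$, $\gamma=1-\rho$, $a_n=1-\alpha_n$, $c_n=d(f(x_n),Tx_n)$, followed by the same combination of \eqref{main-ineq-SAM-2} and \eqref{fxn-Txn-SAM} for the second inequality. The only difference is that you spell out the hypothesis checks that the paper leaves as ``one can easily see.''
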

\begin{proof}
Define 
\begin{align*}
L=2M, \quad N=2, \quad \gamma = 1 - \rho, \quad s_n = d(x_{n + 1}, x_n), 
\quad a_n = 1-\alpha_n, \quad  c_n = d(f(x_n), Tx_n).
\end{align*}
One can easily see that the assumptions of  Lemma~\ref{lem:sabach-shtern-v2} are met, so we can apply it 
to get \eqref{SAM-as-reg}.

\eqref{SAM-T-as-reg} follows easily:
\begin{align*}
d(x_n, T x_n) & \leq d(x_n,x_{n + 1}) + 2M(1-\alpha_n) \quad \text{by~} \eqref{main-ineq-SAM-2} \text{~and~}\eqref{fxn-Txn-SAM}\\
& \leq \frac{2M(J+2)}{(1 - \rho)(n + J)},
\end{align*}
by \eqref{SAM-as-reg} and the definition of $(\alpha_n)$.
\end{proof}

As an immediate consequence of Proposition~\ref{main-prop-SAM}, we get, for $\alpha_n = 1-\frac{2}{(1 - \rho)(n + J)}$,
linear rates of ($T$-)asymptotic regularity. Thus, 
$(x_n)$ is asymptotically regular with rate 
\begin{equation}\label{def-rate-as-reg-SAM}
\Phi(k)= 4M\ceil{\frac{1}{1 - \rho}}^2(k+1)-2\ceil{\frac{1}{1 - \rho}}
\end{equation}
and $T$-asymptotically regular with rate 
\begin{equation}\label{def-rate-T-as-reg-SAM}
\Phi^*(k)= \left(4M\ceil{\frac{1}{1 - \rho}}^2+4M\ceil{\frac{1}{1 - \rho}}\right)(k+1)-2\ceil{\frac{1}{1 - \rho}}.
\end{equation}

Let $u\in C$ and take $f:C\to C, \, f(x)=u$. Then $\rho=0$, $J=2$, $M\geq \max\{d(x_0, p), d(u,p)\}$. 
Furthermore, $(x_n)$ becomes the Halpern iteration.  We get the following:

\begin{corollary}
Let $\alpha_n = 1-\frac{2}{n + 2}$ for all $n\in\N$, and define
\begin{equation}
\Phi_0(k)=4M(k+1)-2, \qquad \Phi^*_0(k)=8M(k+1)-2.
\end{equation}
Then $\Phi_0$ is  
a rate of asymptotic regularity  and  $\Phi^*_0$ is a rate of $T$-asymptotic regularity for the Halpern iteration. 
\end{corollary}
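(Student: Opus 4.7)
The plan is to observe that the Halpern iteration \eqref{def-Halpern} is the special case of the sequential averaging method \eqref{def-SAM} obtained by taking $f$ to be the constant map $f(x)=u$. Since such an $f$ is a $\rho$-contraction with $\rho=0$, we are in position to invoke Proposition~\ref{main-prop-SAM} together with the rates \eqref{def-rate-as-reg-SAM} and \eqref{def-rate-T-as-reg-SAM} derived from it.

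First, I would check that the parameter setup matches. With $\rho=0$, the constant $J$ from Proposition~\ref{main-prop-SAM} becomes $J=2\ceil{1/(1-0)}=2$, so the sequence $\alpha_n=1-\frac{2}{(1-\rho)(n+J)}$ specializes to $\alpha_n=1-\frac{2}{n+2}$, which is exactly the choice in the statement. Next, the bound $M\in\N^*$ required by Lemma~\ref{lem:SAM}\eqref{lem:SAM-ii} simplifies: when $\rho=0$ and $f\equiv u$, the condition $M\geq\max\{d(x,p),d(p,f(p))/(1-\rho)\}$ becomes $M\geq\max\{d(x,p),d(u,p)\}$, so any such $M$ works.

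Finally, I would simply substitute $\rho=0$ into the closed-form rates. The rate of asymptotic regularity \eqref{def-rate-as-reg-SAM} becomes $4M\cdot 1^2\cdot(k+1)-2\cdot 1=4M(k+1)-2=\Phi_0(k)$, and the rate of $T$-asymptotic regularity \eqref{def-rate-T-as-reg-SAM} becomes $(4M+4M)(k+1)-2=8M(k+1)-2=\Phi_0^*(k)$. This yields both claims at once.

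There is no real obstacle here; the corollary is essentially a transcription of the already established general SAM rates at the parameter $\rho=0$ with the constant viscosity $f\equiv u$. The only thing worth flagging is the minor sanity check that a constant map does qualify as a $0$-contraction and that the upper bound $M$ in Lemma~\ref{lem:SAM} indeed collapses to the familiar Halpern bound $\max\{d(x,p),d(u,p)\}$.
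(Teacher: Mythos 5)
Your proposal is correct and follows exactly the paper's own route: the paper likewise obtains the corollary by taking $f\equiv u$ in the SAM iteration, noting $\rho=0$, $J=2$, and $M\geq\max\{d(x,p),d(u,p)\}$, and then substituting into the rates \eqref{def-rate-as-reg-SAM} and \eqref{def-rate-T-as-reg-SAM}. The arithmetic checks out, so nothing is missing.
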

These rates are better by a factor of $2$ than the ones obtained in Proposition~\ref{Halpern-rates}. \\

Very recently \cite{Zas24}, Zaslavski extended the above results  by weakening  the hypothesis that $T$ has fixed points to $T$ having approximate fixed points in a bounded neighborhood of $x$, as guaranteed by results in proof mining (see, for example \cite{GerKoh08,Koh08}), or that $T$ has bounded orbits.

\subsection{Alternative iterative method}

Let $(\alpha_n)$ be a sequence in $[0, 1]$ and $f: C\to C$ be  a $\rho$-contraction for some $\rho \in [0,1)$.
 The alternative iterative method $(x_n)$ is defined as follows:
\begin{equation}
 x_0=x\in C, \qquad    x_{n + 1} = T((1 - \alpha_n)f(x_n)+\alpha_n x_n). \label{def-aim}
\end{equation}
This iteration was studied in \cite{ColLeuLopMar11} in the setting of Banach spaces. 
By letting $f(x)=u\in C$ in \eqref{def-aim}, one obtains, as a particular case, the alternative regularization method, introduced by Xu \cite{Xu10}. 
The second author and Nicolae remarked in \cite{LeuNic17} that results on the asymptotic 
regularity and the strong convergence of the alternative regularization method can be immediately 
obtained from the ones for the Halpern iteration. 

The next lemma extends in a straightforward way \cite[Lemma~2.5]{ColLeuLopMar11} from normed spaces to  $W$-hyperbolic 
spaces and is very similar to Lemma~\ref{lem:SAM}. 

\begin{lemma}\label{aim-main-lemma}
\begin{enumerate}
\item For all $n \in \N$, 
\begin{align}
d(x_{n + 2}, x_{n + 1}) & \leq (1-(1-\rho)(1-\alpha_{n+1}))d(x_{n + 1}, x_n) + \abs{\alpha_n - \alpha_{n + 1}} d(x_n, f(x_n)),
\label{aim-main-ineq}\\
d(x_n, T x_n) & \leq d(x_{n + 1},x_n) + (1 - \alpha_n)d(x_n, f(x_n)). \label{aim-main-T-ineq} 
\end{align}
\item\label{aim-dxnp} For all $n \in \N$ and any fixed point $p$ of $T$, 
\begin{center}
$d(x_n, p) \leq M$ and $d(x_n, f(x_n))\leq 2M$,
\end{center}
with $M$ defined as in Lemma~\ref{lem:SAM}.\eqref{lem:SAM-ii}.
\end{enumerate}
\end{lemma}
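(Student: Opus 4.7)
The proof will closely mirror that of Lemma~\ref{lem:SAM}, with the only difference being that the outer map is $T$ (applied to a convex combination of $f(x_n)$ and $x_n$) rather than a convex combination of $f(x_n)$ and $Tx_n$.

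For the first inequality of part (i), the plan is to apply the nonexpansiveness of $T$ to
$$d(x_{n+2}, x_{n+1}) = d\bigl(T((1-\alpha_{n+1})f(x_{n+1}) + \alpha_{n+1} x_{n+1}),\ T((1-\alpha_n)f(x_n) + \alpha_n x_n)\bigr),$$
which bounds it by the distance between the two convex combinations inside. I then invoke \eqref{prop:arbitrary-w-ineq} with $\lambda = \alpha_{n+1}$, $\widetilde\lambda = \alpha_n$, $x = f(x_{n+1})$, $z = x_{n+1}$, $y = f(x_n)$, $w = x_n$, obtaining
$$(1-\alpha_{n+1}) d(f(x_{n+1}), f(x_n)) + \alpha_{n+1} d(x_{n+1}, x_n) + |\alpha_{n+1} - \alpha_n|\, d(f(x_n), x_n).$$
Using that $f$ is a $\rho$-contraction on the first term and regrouping yields exactly \eqref{aim-main-ineq}. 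For the second inequality, I would use the triangle inequality $d(x_n, Tx_n) \leq d(x_n, x_{n+1}) + d(x_{n+1}, Tx_n)$, recognize $x_{n+1} = T((1-\alpha_n) f(x_n) + \alpha_n x_n)$, apply nonexpansiveness of $T$ to the second term, and finish with \eqref{dxlambday}, which gives $d(x_n, (1-\alpha_n)f(x_n) + \alpha_n x_n) = (1-\alpha_n) d(f(x_n), x_n)$.

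For part (ii), I proceed by induction on $n$ to prove $d(x_n, p) \leq M$. The base case $n=0$ is immediate from $M \geq d(x,p)$. For the induction step, using that $p$ is a fixed point of $T$, the nonexpansiveness of $T$, and (W1):
\begin{align*}
d(x_{n+1}, p) &\leq d((1-\alpha_n) f(x_n) + \alpha_n x_n,\ p) \\
&\leq (1-\alpha_n) d(f(x_n), p) + \alpha_n d(x_n, p) \\
&\leq (1-\alpha_n)\bigl(\rho d(x_n, p) + d(f(p), p)\bigr) + \alpha_n d(x_n, p) \\
&\leq (1-\alpha_n)\bigl(\rho M + (1-\rho) M\bigr) + \alpha_n M = M,
\end{align*}
where I used the inductive hypothesis and $d(f(p), p) \leq (1-\rho) M$, which follows from $M \geq d(p, f(p))/(1-\rho)$. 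Then $d(x_n, f(x_n)) \leq d(x_n, p) + d(p, f(p)) + d(f(p), f(x_n)) \leq M + (1-\rho) M + \rho M = 2M$.

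I do not expect any genuine obstacle: every step is a direct transcription of the argument for Lemma~\ref{lem:SAM}, and the only conceptual point is choosing the correct substitutions in \eqref{prop:arbitrary-w-ineq} so that the residual term $|\alpha_{n+1} - \alpha_n| d(y,w)$ comes out as $|\alpha_{n+1} - \alpha_n| d(f(x_n), x_n)$ rather than $|\alpha_{n+1} - \alpha_n| d(x_n, f(x_n))$ paired incorrectly.
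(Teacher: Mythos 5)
Your proof is correct, and it is exactly what the paper intends: the paper omits the proof of this lemma, stating only that it is a straightforward extension of \cite[Lemma~2.5]{ColLeuLopMar11} and very similar to Lemma~\ref{lem:SAM}, and your argument is precisely that adaptation --- nonexpansiveness of $T$ to reduce to the inner convex combinations, then \eqref{prop:arbitrary-w-ineq}, \eqref{dxlambday}, and the same induction as in Lemma~\ref{lem:SAM}.\eqref{lem:SAM-ii}. All substitutions and constants check out.
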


One can easily see, by using the previous lemma, that Proposition~\ref{main-prop-SAM} holds for the alternative iterative method, too.
Furthermore, one gets for the alternative iterative method exactly the same rates of $(T-)$asymptotic regularity as for SAM.

\subsection{Halpern-type abstract proximal point algorithm}

Let $(T_n : X \to X)$ be a family of self-mappings of $X$ that  have common fixed points. 
Hence, if we denote by $F$ the set $\bigcap_{n\in\N} \operatorname{Fix}(T_n)$, then $F\ne \emptyset$.

The \emph{Halpern-type abstract proximal point algorithm} was defined by Sipo\c{s} \cite{Sip22} as follows:
\begin{align}
x_0=x, \qquad & x_{n + 1} = (1 - \alpha_n)u + \alpha_n T_n x_n, \label{eq:happa}
\end{align}
where $x,u\in C$ and $(\alpha_n)$ is a sequence in $[0, 1]$.\\

Throughout this section, $p\in F$ and $M\in\N^*$ is such that $M\geq \max\left\{d(x, p), d(u, p)\right\}$. 

\begin{lemma}\label{prop:happa-bounds} 
For all $n \in \N$, $d(x_n, p) \leq M$.
\end{lemma}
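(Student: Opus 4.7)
The plan is to prove $d(x_n,p)\le M$ by straightforward induction on $n$, mirroring the argument already used for \eqref{xnp-SAM} in Lemma~\ref{lem:SAM}.\eqref{lem:SAM-ii}. The base case $n=0$ is immediate from $x_0=x$ and the choice of $M\ge d(x,p)$.

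For the inductive step, assume $d(x_n,p)\le M$. Applying (W1) to the definition \eqref{eq:happa} of $x_{n+1}$, I would split
\[
d(x_{n+1},p)\le (1-\alpha_n)d(u,p)+\alpha_n d(T_nx_n,p).
\]
Since $p\in F=\bigcap_{n\in\N}\operatorname{Fix}(T_n)$, we have $T_np=p$, and invoking the standing nonexpansiveness assumption (which, in line with the global convention of Section~3, applies to each $T_n$) gives $d(T_nx_n,p)\le d(x_n,p)\le M$. Combined with $d(u,p)\le M$, the convex-combination inequality collapses to
\[
d(x_{n+1},p)\le (1-\alpha_n)M+\alpha_nM = M,
\]
closing the induction.

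There is essentially no obstacle here: the lemma is the usual boundedness statement underlying any Halpern-type scheme, and the only ingredients needed are (W1), the common-fixed-point property of the family $(T_n)$, and quasi-nonexpansiveness of each $T_n$ at $p$. The role of this lemma is purely preparatory, exactly as \eqref{xnp-SAM} was for SAM, so that in subsequent estimates one can control quantities such as $d(u,T_nx_n)$ by a constant multiple of $M$ before feeding the main recursion into Lemma~\ref{lem:sabach-shtern-v2}.
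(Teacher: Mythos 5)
Your proof is correct and is precisely the ``easy induction on $n$'' that the paper invokes without spelling out: base case from $M\ge d(x,p)$, inductive step via (W1), $T_np=p$, and (quasi-)nonexpansiveness of $T_n$ at $p$. No divergence from the paper's approach.
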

\begin{proof} An easy induction on $n$.
\end{proof}

From now on we assume that $(\gamma_n)$ is a sequence of positive reals such that for all $y\in X$ and $n, m \in \N$,
\begin{align}\label{main-ineq-HPPA}
d(T_n y, T_m y) \leq \frac{\abs{\gamma_n - \gamma_m}}{\gamma_n} d(y, T_n y). 
\end{align}
This is condition (C1) from  \cite{LeuNicSip18}, used to obtain abstract versions of the proximal point algorithm. 
As proved in \cite[Proposition~3.10]{LeuNicSip18}, \eqref{main-ineq-HPPA} holds if $X$ is a CAT(0) space and the family $(T_n)$ is 
jointly $(P_2)$ with respect to $(\gamma_n)$, a notion introduced in \cite{LeuNicSip18} as a generalization of 
the so-called $(P_2)$ property \cite{AriLopNic15}. Families of mappings $(T_n)$ that are jointly 
firmly nonexpansive with respect to $(\gamma_n)$ are also defined and studied in \cite{LeuNicSip18} as an extension of the definition of 
a firmly nonexpansive mapping in $W$-hyperbolic spaces, given in \cite{AriLeuLop14}. 
Jointly firmly nonexpansive families of mappings are jointly $(P_2)$ with respect to $(\gamma_n)$, hence they also satisfy \eqref{main-ineq-HPPA} in CAT(0) spaces.
We refer to \cite{LeuNicSip18,Sip22a} for more information about these families of mappings.

The following result establishes the essential inequality that allows us to apply Lemma~\ref{lem:sabach-shtern-v2}.

\begin{lemma}
For all $n \in \N$, 
\begin{align}
d(x_{n + 2}, x_{n + 1}) & \leq \alpha_{n + 1} d(x_{n + 1}, x_n) +\frac{2M\alpha_{n + 1}\abs{\gamma_n - \gamma_{n + 1}}}{\gamma_n}
+2M \abs{\alpha_{n + 1} - \alpha_n}. \label{eq:happa-main-ineq}
\end{align}
\end{lemma}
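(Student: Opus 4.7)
The plan is to apply the generalized convexity inequality \eqref{prop:arbitrary-w-ineq} to the recurrence \eqref{eq:happa} and then control the resulting $T$-distance with condition \eqref{main-ineq-HPPA}. Writing $x_{n+2} = (1-\alpha_{n+1})u + \alpha_{n+1} T_{n+1} x_{n+1}$ and $x_{n+1} = (1-\alpha_n)u + \alpha_n T_n x_n$, both are convex combinations sharing the common first vertex $u$. Applying \eqref{prop:arbitrary-w-ineq} with $x = y = u$, $z = T_{n+1} x_{n+1}$, $w = T_n x_n$, $\lambda = \alpha_{n+1}$, $\widetilde{\lambda} = \alpha_n$, and using $d(u,u) = 0$, one obtains
$$d(x_{n+2}, x_{n+1}) \leq \alpha_{n+1}\, d(T_{n+1} x_{n+1}, T_n x_n) + |\alpha_{n+1} - \alpha_n|\, d(u, T_n x_n).$$

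Next I would insert the intermediate point $T_n x_{n+1}$ via the triangle inequality, splitting $d(T_{n+1} x_{n+1}, T_n x_n) \leq d(T_{n+1} x_{n+1}, T_n x_{n+1}) + d(T_n x_{n+1}, T_n x_n)$. The first summand is bounded by \eqref{main-ineq-HPPA} applied to $y = x_{n+1}$, with the two indices arranged so that $\gamma_n$ appears in the denominator; this yields $\frac{|\gamma_n - \gamma_{n+1}|}{\gamma_n}\, d(x_{n+1}, T_n x_{n+1})$. The second summand is controlled by $d(x_{n+1}, x_n)$ using the nonexpansiveness of $T_n$, which is available in the intended setting of jointly firmly nonexpansive families mentioned after \eqref{main-ineq-HPPA}.

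Finally, I would bound the two auxiliary quantities $d(x_{n+1}, T_n x_{n+1})$ and $d(u, T_n x_n)$ each by $2M$: for each, interpolate through the common fixed point $p$ with the triangle inequality; estimate the $d(p, T_n y)$ piece by $d(p, y)$ using $T_n p = p$ and nonexpansiveness; and apply Lemma~\ref{prop:happa-bounds} together with the hypothesis $d(u, p) \leq M$. Assembling all the pieces reproduces exactly \eqref{eq:happa-main-ineq}. The one subtlety I would flag is the index bookkeeping: condition \eqref{main-ineq-HPPA} is not symmetric in its two indices, so the instantiation must be chosen carefully so that the denominator is $\gamma_n$ rather than $\gamma_{n+1}$ to match the target bound; once that choice is made, everything else reduces to standard triangle-inequality manipulations.
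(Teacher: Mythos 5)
Your proposal is correct and follows essentially the same route as the paper: apply \eqref{prop:arbitrary-w-ineq} to the two convex combinations sharing the vertex $u$, split the resulting distance between the $T$-images by a triangle inequality through an intermediate point, and finish with \eqref{main-ineq-HPPA}, nonexpansiveness, and the $2M$ bounds obtained from Lemma~\ref{prop:happa-bounds}. The only (immaterial) difference is the choice of intermediate point: the paper inserts $T_{n+1}x_n$ and bounds $d(x_n,T_nx_n)\leq 2M$, whereas you insert $T_nx_{n+1}$ and bound $d(x_{n+1},T_nx_{n+1})\leq 2M$; both instantiations of \eqref{main-ineq-HPPA} take the first index to be $n$, so the denominator $\gamma_n$ comes out right either way.
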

\begin{proof}
We have that for all $n\in\N$, 
\begin{align*}
d(x_{n + 2}, x_{n + 1}) 
&= d((1 - \alpha_{n + 1}) u + \alpha_{n + 1} T_{n + 1} x_{n + 1}, (1 - \alpha_n) u + \alpha_n T_n x_n) \\ 
& \leq \alpha_{n + 1} d(T_{n + 1} x_{n + 1}, T_n x_n) 
 + \abs{\alpha_{n + 1} - \alpha_n} d(u, T_n x_n) \quad\text{by \eqref{prop:arbitrary-w-ineq}}\\ 
&\leq \alpha_{n + 1} d(T_{n + 1} x_{n + 1}, T_{n + 1} x_n) + \alpha_{n + 1} d(T_{n + 1} x_n, T_n x_n) + 
\abs{\alpha_{n + 1} - \alpha_n} d(u, T_n x_n) \\ 
& \leq \alpha_{n + 1} d(x_{n + 1}, x_n) + \frac{\alpha_{n + 1}\abs{\gamma_n - \gamma_{n + 1}}}{\gamma_n} d(x_n, T_n x_n) + 
\abs{\alpha_{n + 1} - \alpha_n} d(u, T_n x_n), 
\end{align*}
by \eqref{main-ineq-HPPA} and the nonexpansiveness of $T_{n + 1}$. Apply Lemma~\ref{prop:happa-bounds}
to obtain that 
$d(x_n, T_n x_n) \leq d(T_n x_n, p) + d(x_n, p) \leq  2 d(x_n,p)\leq 2M$ and 
$d(u, T_n x_n) \leq d(T_n x_n, p) + d(p, u)\leq d(x_n,p) + d(p,u) \leq 2M$. 
Then \eqref{eq:happa-main-ineq} follows. 
\end{proof}

\begin{lemma}\label{prop:T-as-reg-ineq}
For all $m, n \in \N$,
\begin{align}
d(x_n, T_n x_n) & \leq d(x_n, x_{n + 1}) + 2M(1 - \alpha_n), \label{Tn-as-reg-ineq}\\
d(x_n, T_m x_n) & \leq d(x_n, T_nx_n)+\frac{\abs{\gamma_n - \gamma_m}}{\gamma_n}d(x_n, T_nx_n). \label{Tm-as-reg-ineq}
\end{align}
\end{lemma}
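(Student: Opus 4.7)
The plan is to derive both inequalities by direct application of the triangle inequality, using \eqref{dxlambday} for the first part and the assumed inequality \eqref{main-ineq-HPPA} for the second part, together with the uniform bound from Lemma~\ref{prop:happa-bounds}.

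For \eqref{Tn-as-reg-ineq}, I would start with the triangle inequality
\[ d(x_n, T_n x_n) \leq d(x_n, x_{n+1}) + d(x_{n+1}, T_n x_n). \]
Since by definition $x_{n+1} = (1-\alpha_n) u + \alpha_n T_n x_n$, applying \eqref{dxlambday} gives $d(x_{n+1}, T_n x_n) = (1-\alpha_n) d(u, T_n x_n)$. It then remains to bound $d(u, T_n x_n)$ by $2M$, which follows from the triangle inequality
\[ d(u, T_n x_n) \leq d(u, p) + d(p, T_n x_n) \leq d(u, p) + d(x_n, p) \leq 2M, \]
using that $T_n p = p$, the nonexpansiveness of $T_n$, Lemma~\ref{prop:happa-bounds}, and the definition of $M$.

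For \eqref{Tm-as-reg-ineq}, a single triangle inequality gives
\[ d(x_n, T_m x_n) \leq d(x_n, T_n x_n) + d(T_n x_n, T_m x_n), \]
and instantiating \eqref{main-ineq-HPPA} at $y = x_n$ yields
\[ d(T_n x_n, T_m x_n) \leq \frac{\abs{\gamma_n - \gamma_m}}{\gamma_n} d(x_n, T_n x_n), \]
which immediately delivers the claim.

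There isn't really a hard step here; both parts are essentially one triangle inequality plus one previously stated fact. The only mild subtlety is the bookkeeping for \eqref{Tn-as-reg-ineq}: recognising that the right auxiliary term is $d(x_{n+1}, T_n x_n)$ (so that \eqref{dxlambday} applies cleanly with the coefficient $1-\alpha_n$) rather than, say, $d(x_{n+1}, x_n) + d(x_n, T_n x_n)$, which would be circular.
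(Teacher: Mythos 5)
Your proof is correct and follows essentially the same route as the paper: the same triangle-inequality decomposition via $d(x_{n+1},T_nx_n)$ combined with \eqref{dxlambday} for the first inequality, and the same one-line application of \eqref{main-ineq-HPPA} for the second. The only difference is that you re-derive the bound $d(u,T_nx_n)\leq 2M$, which the paper simply reuses from the proof of its preceding lemma.
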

\begin{proof}
Let $n \in \N$. Then
\begin{align*}
d(x_n, T_n x_n) 
&\leq d(x_n, x_{n + 1}) + d(x_{n + 1}, T_n x_n) \stackrel{\eqref{dxlambday}}{=} d(x_n, x_{n + 1}) + (1 - \alpha_n) d(u, T_n x_n) \\
&\leq  d(x_n, x_{n + 1}) + 2M(1 - \alpha_n).
\end{align*}
and $d(x_n, T_m x_n) \leq d(x_n, T_nx_n) +d(T_nx_n, T_mx_n) \stackrel{\eqref{main-ineq-HPPA}}{\leq} 
d(x_n, T_nx_n)+\frac{\abs{\gamma_n - \gamma_m}}{\gamma_n}d(x_n, T_nx_n)$.
\end{proof}

\begin{proposition}\label{happa-main}
Let $\alpha_n = 1 - \frac{2}{n + 2}$ and $\gamma_n=\frac{n+2}{n+1}$ for all $n\in\N$. 
The following hold for all $m,n\in \N$:
\begin{align*}
d(x_{n + 1}, x_n)  \leq \frac{6M}{n + 2}, \quad 
d(x_n, T_n x_n) \leq \frac{10M}{n + 2},  \quad \text{and~}
d(x_n, T_m x_n)  \leq \frac{20M}{n + 2}.  
\end{align*}
\end{proposition}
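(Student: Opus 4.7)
The plan is to apply Lemma~\ref{lem:sabach-shtern-v2} to the inequality \eqref{eq:happa-main-ineq} in order to establish the first bound, and then deduce the other two bounds from Lemma~\ref{prop:T-as-reg-ineq}.

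First I specialize the quantities appearing in \eqref{eq:happa-main-ineq}. A direct computation with $\gamma_n=\frac{n+2}{n+1}$ gives $\gamma_n-\gamma_{n+1}=\frac{1}{(n+1)(n+2)}$, hence $\frac{|\gamma_n-\gamma_{n+1}|}{\gamma_n}=\frac{1}{(n+2)^2}$. Likewise, $|\alpha_{n+1}-\alpha_n|=\frac{2}{(n+2)(n+3)}$. Substituting into \eqref{eq:happa-main-ineq} yields
\begin{equation*}
d(x_{n+2},x_{n+1})\leq \alpha_{n+1}\,d(x_{n+1},x_n)+\frac{2M\alpha_{n+1}}{(n+2)^2}+\frac{4M}{(n+2)(n+3)}.
\end{equation*}

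To match the template of Lemma~\ref{lem:sabach-shtern-v2}, I set $s_n=d(x_{n+1},x_n)$, $\gamma=1$ and $N=J=2$, so that $a_n=\frac{2}{n+2}$, $1-a_{n+1}=\alpha_{n+1}$, and $a_n-a_{n+1}=\frac{2}{(n+2)(n+3)}$. The two error terms should then be absorbed into $(a_n-a_{n+1})c_n$; rearranging, this demands $c_n\geq \frac{M\alpha_{n+1}(n+3)}{n+2}+2M$. The decisive simplification is the identity $\alpha_{n+1}(n+3)=n+1$, which reduces the required lower bound to $\frac{M(n+1)}{n+2}+2M\leq 3M$. Thus I take $c_n:=\frac{M(n+1)}{n+2}+2M$ and $L:=3M$; the initial condition $s_0=d(x_1,x_0)=d(u,x)\leq d(u,p)+d(p,x)\leq 2M\leq L$ uses only $\alpha_0=0$ and the definition of $M$. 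Lemma~\ref{lem:sabach-shtern-v2} then delivers $d(x_{n+1},x_n)\leq \frac{JL}{\gamma(n+J)}=\frac{6M}{n+2}$.

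For the second bound, \eqref{Tn-as-reg-ineq} together with $1-\alpha_n=\frac{2}{n+2}$ and the estimate just obtained gives $d(x_n,T_nx_n)\leq \frac{6M}{n+2}+\frac{4M}{n+2}=\frac{10M}{n+2}$. For the third, I rewrite $\gamma_k=1+\frac{1}{k+1}$, so that $\gamma_k-1\in(0,1]$ for every $k$; hence $|\gamma_n-\gamma_m|\leq 1$, and combined with $\gamma_n\geq 1$ this gives $\frac{|\gamma_n-\gamma_m|}{\gamma_n}\leq 1$. Inserting into \eqref{Tm-as-reg-ineq} produces $d(x_n,T_mx_n)\leq 2\,d(x_n,T_nx_n)\leq \frac{20M}{n+2}$, as required.

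The only genuinely nontrivial step is choosing $c_n$ in the Sabach--Shtern setup. A careless bound $\alpha_{n+1}\leq 1$ yields $L=\frac{7M}{2}$ and the weaker estimate $\frac{7M}{n+2}$; recognizing the cancellation $\alpha_{n+1}(n+3)=n+1$ is what yields the advertised constant $6M$ and then propagates to the factors $10M$ and $20M$. The rest of the argument is a sequence of routine bookkeeping steps.
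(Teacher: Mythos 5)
Your proposal is correct and follows essentially the same route as the paper: the same instantiation of Lemma~\ref{lem:sabach-shtern-v2} with $L=3M$, $N=J=2$, $\gamma=1$ and the same choice of $c_n$ (via the cancellation $\alpha_{n+1}(n+3)=n+1$), followed by the same use of \eqref{Tn-as-reg-ineq} and \eqref{Tm-as-reg-ineq}. The only cosmetic differences are that you bound $s_0$ via $x_1=u$ rather than through the fixed point $p$, and you justify $\frac{\abs{\gamma_n-\gamma_m}}{\gamma_n}\leq 1$ by writing $\gamma_k=1+\frac{1}{k+1}$ instead of computing the quotient explicitly.
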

\begin{proof}
We use Lemma~\ref{lem:sabach-shtern-v2} with $L=3M$, $N=J=2$,  
$\gamma=1$, $s_n = d(x_{n + 1}, x_n)$, $a_n = 1 - \alpha_n=\frac2{n+2}$, and $c_n = \frac{2M\alpha_{n + 1}(\gamma_n - \gamma_{n + 1})}{(\alpha_{n + 1}-\alpha_n)\gamma_n}+2M$.  As $\alpha_{n + 1}-\alpha_n=\frac2{(n+2)(n+3)}>0$ and $\gamma_n - \gamma_{n+1}=\frac{1}{(n+1)(n+2)}>0$, we get, by   \eqref{eq:happa-main-ineq}, that for all $n\in\N$, 
\begin{align*}
s_{n+1} &= d(x_{n + 2}, x_{n + 1}) \leq \alpha_{n + 1} d(x_{n + 1}, x_n) +(\alpha_{n + 1} - \alpha_n)
\left(\frac{2M\alpha_{n + 1}(\gamma_n - \gamma_{n + 1})}{\gamma_n(\alpha_{n + 1} - \alpha_n)} + 2M\right)\\
& = (1 - \gamma a_{n + 1}) s_n + (a_n - a_{n + 1}) c_n.
\end{align*}
Furthermore, $c_n= \frac{M(n+1)}{n+2}+2M<3M=L$, and $s_0=d(x_1,x_0)\leq d(x_1,p)+d(x_0,p) \leq 2M<L$, by Lemma~\ref{prop:happa-bounds}. Hence, we can apply Lemma~\ref{lem:sabach-shtern-v2} to get the first inequality. 
The second inequality follows by \eqref{Tn-as-reg-ineq}. Furthermore, the last inequality holds by 
\eqref{Tm-as-reg-ineq} and the fact that  for all $m,n\in\N$, $ \frac{\abs{\gamma_n - \gamma_m}}{\gamma_n} = \frac{\abs{m-n}}{(m+1)(n+2)}<1$.
\end{proof}

Proposition~\ref{happa-main} gives us linear rates for the Halpern-type 
abstract proximal point algorithm.

\begin{proposition}\label{linear-rates-HAPP}
Let $\alpha_n = 1 - \frac{2}{n + 2}$ and $\gamma_n=\frac{n+2}{n+1}$ for all $n\in\N$.  Define
\begin{align}\label{linear-rates-HAPPA}
\Phi(k) = 6M(k+1)-2, \quad \Phi^*(k)= 10M(k+1)-2, \quad and \quad \tilde{\Phi}(k)= 20M(k+1)-2. 
\end{align}
Then $(x_n)$ is asymptotically regular with rate $\Phi$, $(T_n)$-asymptotically regular  with rate $\Phi^*$, 
and, for all $m\in\N$, $T_m$-asymptotically regular with rate $\tilde{\Phi}$. 
\end{proposition}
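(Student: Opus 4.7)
The plan is to deduce this proposition as an immediate corollary of Proposition~\ref{happa-main} by unpacking the definition of a rate of convergence. Recall that $\varphi:\N\to\N$ is a rate of convergence of $(b_n)$ to $0$ iff for every $k\in\N$ and every $n\geq\varphi(k)$ we have $b_n\leq \frac{1}{k+1}$. So for each of the three bounds established in Proposition~\ref{happa-main} we just need to determine, given $k\in\N$, the smallest integer $n$ ensuring that the right-hand side does not exceed $\frac{1}{k+1}$.

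First, fix $k\in\N$ and assume $n\geq \Phi(k)=6M(k+1)-2$. Then $n+2\geq 6M(k+1)$, so by Proposition~\ref{happa-main}, $d(x_n,x_{n+1})\leq \frac{6M}{n+2}\leq \frac{1}{k+1}$, which shows that $\Phi$ is a rate of asymptotic regularity of $(x_n)$.

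Next, if $n\geq \Phi^*(k)=10M(k+1)-2$, then $n+2\geq 10M(k+1)$, and again by Proposition~\ref{happa-main} we obtain $d(x_n,T_nx_n)\leq \frac{10M}{n+2}\leq \frac{1}{k+1}$, so $\Phi^*$ is a rate of $(T_n)$-asymptotic regularity.

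Finally, for the $T_m$-asymptotic regularity, the key point is that the bound $d(x_n,T_mx_n)\leq \frac{20M}{n+2}$ from Proposition~\ref{happa-main} is uniform in $m$. Hence the same argument as above shows that whenever $n\geq \tilde{\Phi}(k)=20M(k+1)-2$ we have $d(x_n,T_mx_n)\leq \frac{1}{k+1}$ for every $m\in\N$, so $\tilde{\Phi}$ is a rate of $T_m$-asymptotic regularity of $(x_n)$ for every fixed $m$. There is no real obstacle here: the work is concentrated in Proposition~\ref{happa-main}, and the present proposition is just a reformulation of those three bounds in the language of rates of convergence.
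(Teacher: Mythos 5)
Your proof is correct and matches the paper's approach: the paper gives no explicit proof, treating the proposition as an immediate consequence of Proposition~\ref{happa-main}, and your unpacking of the definition of a rate of convergence (with the arithmetic $n+2\geq cM(k+1)$ giving $\frac{cM}{n+2}\leq\frac{1}{k+1}$ for $c=6,10,20$) is exactly the intended verification, including the observation that the third bound is uniform in $m$.
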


\subsection{Halpern-type proximal point algorithm}

Let $X$ be a Hilbert space and $A : X \to 2^X$ be a maximally monotone operator such that the set 
$\operatorname{zer}(A) = \{x \in X \mid 0 \in A x\}$  of zeros of $A$ is nonempty. Recall that for any $\gamma > 0$, 
the resolvent of order $\gamma$ of $A$ is defined by $J_{\gamma A} = (\operatorname{Id} + \gamma A)^{-1}$, 
where $\operatorname{Id}$ is the identity mapping of $X$.  As $A$ is maximally monotone,  $J_{\gamma A}: X\to X$ is 
a single-valued firmly nonexpansive (in particular, nonexpansive) mapping such that  
$\operatorname{Fix}(J_{\gamma A})=\operatorname{zer}(A)$ for 
every $\gamma > 0$. We refer to \cite{BauCom17} for details. 

The \emph{Halpern-type proximal point algorithm} was introduced by Kamimura and Takahashi \cite{KamTak00} as a combination 
between the Halpern iteration and the proximal point algorithm:
\begin{align}
x_0=x, \qquad & x_{n + 1} = (1 - \alpha_n)u + \alpha_nJ_{\gamma_n A}x_n, \label{eq:hppa}
\end{align}
where $x,u\in C$, $(\alpha_n)$ is a sequence in $[0, 1]$, and  $(\gamma_n)$ a sequence in $(0, \infty)$.  A generalization of the iteration 
\eqref{eq:hppa} to $m$-accretive operators in Banach spaces was studied by Dominguez-Benavides, L\'opez-Acedo, and Xu \cite{DomLopXu03}.

If $\alpha_n=1$, then $(x_n)$ becomes the proximal point algorithm, while if $\gamma_n=\gamma$ is a constant sequence, then
$(x_n)$ is the Halpern iteration with $T=J_{\gamma A}$. 

By \cite[Proposition~3.21]{LeuNicSip18}, the family $(J_{\gamma_nA})$ is jointly 
firmly nonexpansive with respect to $(\gamma_n)$,  hence \eqref{main-ineq-HPPA} holds with $T_n=J_{\gamma_nA}$. Furthermore, 
$\bigcap_{n \in \N} \operatorname{Fix}(J_{\gamma_n A})=\operatorname{zer}(A)\ne \emptyset$. 
    
It follows that we can apply Proposition~\ref{linear-rates-HAPP} with $T_n=J_{\gamma_nA}$ to 
get, for $\alpha_n = 1 - \frac{2}{n + 2}$ and $\gamma_n=\frac{n+2}{n+1}$, that $(x_n)$ is ($(T_n)$, $T_m$)-asymptotically 
regular with linear rates given by \eqref{linear-rates-HAPPA}. This is a significant
improvement of the polynomial rates obtained by the second author and Pinto 
in \cite[Proposition~9]{LeuPin21} for another choice of the parameter sequences. 
   
\subsection{Minimizers of proper convex lsc functions}

Let $X$ be a complete CAT(0) space and $f : X \to (-\infty, +\infty]$ be a proper convex 
lower semicontinuous (lsc) function  that has minimizers, that is the set 
$\operatorname{Argmin}(f)=\{x\in X \mid f(x) \leq f(y) \text{~for all~}y\in X\}$ of minimizers of $f$ is nonempty. 
For any $\gamma > 0$,  the resolvent $J_{\gamma f}$ of $f$ of order $\gamma$ 
was defined in a nonlinear setting  by Jost \cite{Jos95}:
\begin{equation*}
    J_{\gamma f}: X\to X, \quad   J_{\gamma f}(x) = \operatorname{argmin}_{y \in X} \left(f(y) + \frac{1}{2\gamma} d^2(x, y)\right).
\end{equation*}

Jost proved in \cite{Jos95} that for all $\gamma > 0$,  $J_{\gamma f}$ is nonexpansive and 
$\operatorname{Fix}(J_{\gamma f}) =\operatorname{Argmin}(f)$. In fact, $J_{\gamma f}$ is even firmly nonexpansive, as pointed 
out in \cite[Proposition~3.3]{AriLeuLop14}. 

We have that $\bigcap_{n \in \N} \operatorname{Fix}(J_{\gamma_n f})=\operatorname{Argmin}(f)\ne \emptyset$ and that 
\eqref{main-ineq-HPPA} holds with $T_n=J_{\gamma_nf}$,  since the family $(J_{\gamma_nf})$ is jointly 
firmly nonexpansive with respect to $(\gamma_n)$, as  proved in \cite[Proposition~3.17]{LeuNicSip18}. 
    
It follows that we can apply Proposition~\ref{linear-rates-HAPP}  with $T_n=J_{\gamma_nf}$ to 
get linear rates  for $\alpha_n = 1 - \frac{2}{n + 2}$ and $\gamma_n=\frac{n+2}{n+1}$. 

\mbox{}

\section*{Acknowledgements}

The authors thank Andrei Sipo\c{s} for providing comments and suggestions that improved the final version of the paper.

\end{document}